\newtheorem{theorem}{Theorem}
\newtheorem{lemma}[theorem]{Lemma}
\theoremstyle{definition}
\newtheorem{definition}[theorem]{Definition}
\newtheorem{example}[theorem]{Example}
\newtheorem{remark}[theorem]{Remark}
\begin{document}
		
	\title{An effective proof of the Cartan formula: the even prime}
	
	\author{Anibal M. Medina-Mardones} 
	
	\email{anibal.medinamardones@epfl.ch}
	
	\address{Laboratory for Topology and Neuroscience, École Polytechnique Fédérale de Lausanne, Switzerland}
	\address{Department of Mathematics, University of Notre Dame du Lac, Notre Dame, IN, USA}
	
	\maketitle
	
	\begin{abstract}
		The Cartan formula encodes the relationship between the cup product and the action of the Steenrod algebra in $\mathbb F_p$-cohomology. In this work, we present an effective proof of the Cartan formula at the cochain level when the field is $\mathbb F_2$. More explicitly, for an arbitrary pair of cocycles and any non-negative integer, we construct a natural coboundary that descends to the associated instance of the Cartan formula. Our construction works for general algebras over the Barratt-Eccles operad, in particular, for the singular cochains of spaces.
	\end{abstract}
	
%
	
	\section{Introduction}
	
	Let $X$ be a space. In \cite{steenrod47products}, Steenrod introduced formulae to define his famous \textbf{Steenrod squares}
	\begin{equation*}
	Sq^k : H^*(X; \mathbb F_2) \to H^*(X; \mathbb F_2)
	\end{equation*}
	and in \cite{steenrod62operations}, he axiomatically characterized them by the following:
	\begin{enumerate} [1.]
		\item $Sq^k$ is natural,
		\item $Sq^0$ is the identity,
		\item $Sq^k(x) = x^2$ for $x \in H^{-k}(X; \mathbb F_2)$,
		\item $Sq^k(x) = 0$ for $x \in H^{-n}(X; \mathbb F_2)$ with $n>k$,
		\item $Sq^k(xy) = \sum_{i+j=k} Sq^i (x) Sq^j(y)$.
	\end{enumerate}
	Axiom 5., known as the \textbf{Cartan formula}, is the focus of this work. 
	
	To describe our viewpoint and present the contributions of this paper in context, let us revisit some of the history of Steenrod's construction.
	
	In the late thirties, Alexander, Whitney, and \v{C}ech defined the ring structure on cohomology 
	\begin{equation} \label{equation: definition cup product}
	[\alpha] [\beta] = [\alpha \smallsmile_{0} \beta]
	\end{equation}
	using a cochain level construction
	\begin{equation*}
	\smallsmile_0 \, : N^*(X; \mathbb Z) \otimes N^*(X; \mathbb Z) \to N^*(X; \mathbb Z)
	\end{equation*}
	dual to a choice of simplicial chain approximation to the diagonal inclusion. 
	
	Steenrod then showed that $\smallsmile_0$ is commutative up to coherent homotopies by effectively constructing \textbf{cup-$i$ products}
	\begin{equation*}
	\smallsmile_i\, : N^*(X; \mathbb Z) \otimes N^*(X; \mathbb Z) \to N^*(X; \mathbb Z)
	\end{equation*}
	enforcing its derived commutativity. (Axioms for these and connections with higher category theory can be found in \cite{medina2018axiomatic} and \cite{medina2019globular}).  Then, with coefficients in $\mathbb F_2$, Steenrod defined
	\begin{equation} \label{equation: definition steenrod squares}
	Sq^k\big([\alpha]\big) = [\alpha \smallsmile_{k-n} \alpha].
	\end{equation}
	This definition of the Steenrod squares makes the Cartan formula equivalent to 
	\begin{equation} \label{equation: Cartan 1}
	0 = 
	\Big[ (\alpha \smallsmile_0 \beta) \smallsmile_i (\alpha \smallsmile_0 \beta)\ + \sum_{i=j+k} (\alpha \smallsmile_j \alpha) \smallsmile_0 (\beta \smallsmile_k \beta) \Big].
	\end{equation}
	The goal of this work is to effectively construct for any $i \geq 0$ and arbitrary pair of cocycles $\alpha, \beta \in N^*(X; \mathbb F_2)$ a natural cochain $\zeta_i(\alpha \otimes \beta)$ such that
	\begin{equation} \label{equation: Cartan 2}
	\delta \zeta_i(\alpha \otimes \beta) = 
	(\alpha \smallsmile_0 \beta) \smallsmile_i (\alpha \smallsmile_0 \beta)\ + \sum_{i=j+k} (\alpha \smallsmile_j \alpha) \smallsmile_0 (\beta \smallsmile_k \beta).
	\end{equation}	
	Following \cite{may70generalapproach}, we take a more general approach and in doing so we describe a non-necessarily effective construction for any \mbox{$E_\infty$-algebra}. We work over a fixed algebraic model $\mathcal E$ of the \mbox{$E_\infty$-operad }known as the Barratt-Eccles operad. This model, introduced by Berger-Fresse in \cite{berger04combinatorial}, is equipped with a diagonal map and the natural $\mathcal E$-algebra structure defined by these authors on the normalized cochains of simplicial sets is suitable for effective constructions.
	
	This paper is part of an ongoing effort spearheaded by Greg Brumfiel and John Morgan to build effective models of classical homotopy-theoretic concepts, see for example \cite{brumfiel2016pontrjagin} and \cite{brumfiel2018quadratic}. Motivation for this especific project came from a question of Anton Kapustin. See \cite{kapustin2017fermionic} for an instance where one of our formulae is used in the context of topological phases of matter.
	
	Implementations of the constructions of this paper and of state-of-the-art algorithms for the computation of Steenrod squares and \mbox{cup-$i$} products, as introduced in \cite{medina2018persistence}, can be found in the author's website.
	
	\subsection*{Acknowledgement} We would like to thank John Morgan, Greg Brumfiel, Dennis Sullivan, Anton Kapustin, Mark Behrens, Marc Stephan, and Kathryn Hess for their insights, questions, and comments about this project.
	
	\section{Conventions and preliminaries}
	
	For the remainder of this paper all algebraic constructions are considered over $\mathbb F_2$, the field with two elements. 
	
	\subsection{Chain complexes and simplicial sets}
	 
	We denote the category of chain complexes of \mbox{$\mathbb F_2$-modules} by $\mathbf{Ch}$. Boundary maps decrease degree and for chain complexes $C$ and $C'$ the set of linear maps $\mathrm{Hom}(C, C')$ is a chain complex with 
	\begin{equation*}
	\mathrm{Hom}(C,C')_n = \{f\ :\ c \in C_m \text{ implies } f(c) \in C'_{m+n}\}
	\qquad
	\partial f = \partial \circ f + f \circ \partial.
	\end{equation*}
	Let $f, g, h \in \mathrm{Hom}(C,C')$. We remark that $f$ is a chain map if and only if it is a degree $0$ cycle. Assume $f$ and $g$ are chain maps, then $h$ is a chain homotopy between them if and only if 
	\begin{equation*}
	\partial h = f + g.
	\end{equation*}
	We remark that $f \in \mathrm{Hom}(C,C')$ is a chain map if and only if it is a degree $0$ cycle, and that $h \in \mathrm{Hom}(C,C')$ is a chain homotopy between two chain maps $f$ and $g$ if and only if 
	\begin{equation*}
	\partial h = f + g.
	\end{equation*}
	
	The product of two chain complexes $C$ and $C'$ is defined by
	\begin{equation*}
	(C \otimes C')_n = \bigoplus_{i+j=n} C_i \otimes C'_j\ \qquad
	\partial = \partial \otimes \mathrm{id} + \mathrm{id} \otimes \partial.
	\end{equation*}
	
	The category $\bm{\Delta}$ is defined to have an object $[n] = \{0, \dots, n\}$ for every non-negative integer $n$ and a morphism $[m] \to [n]$ for each order-preserving function from $\{0, \dots, m\}$ to $\{0, \dots, n\}$.
	
	For integers $0 \leq i \leq n$, the morphisms 
	\begin{equation*}
	\delta_i : [n-1] \to [n] \qquad \sigma_i : [n+1] \to [n]
	\end{equation*}
	defined by
	\begin{equation*}
	\delta_i(k) = 
	\begin{cases} k & k < i \\ k+1 & i \leq k \end{cases}
	\quad \text{ and } \quad
	\sigma_i(k) = 
	\begin{cases} k & k \leq i \\ k-1 & i < k \end{cases}
	\end{equation*} 
	generate all morphisms in $\bm{\Delta}$.
	
	A \textbf{simplicial set} is a contravariant functor from $\bm{\Delta}$ to the category of  sets. We denote the category of simplicial sets by $\mathbf{sSet}$  and use the standard notation
	\begin{equation*}
	X[n] = X_n \qquad X(\delta_i) = d_i \qquad X(\sigma_i) = s_{i.}
	\end{equation*}
	The product of two simplicial sets $X$ and $Y$ is defined by
	\begin{equation*}
	(X \times Y)_n = X_n \times Y_n \qquad
	d_i = d_i \times d_i \qquad
	s_i = s_i \times s_{i.}
	\end{equation*}

	The functor of \textbf{normalized chains}
	\begin{equation*}
	N_* : \mathbf{sSet} \to \mathbf{Ch}
	\end{equation*} 
	is defined by
	\begin{equation*}
	N_n(X) = \frac{\mathbb F_2 \{ X_n \}}{\mathbb F_2 \{ s(X_{n-1}) \}} \ \qquad
	\partial_n = \sum_{i=0}^{n} d_{i}
	\end{equation*}
	where $s(X_{n-1}) = \bigcup_{i=0}^{n-1} s_i(X_{n-1})$.
	
	The functor of \textbf{normalized cochains} $N^*$ is defined by composing $N_*$ with the linear duality functor $\mathrm{Hom}(-, \mathbb F_2) : \mathbf{Ch} \to \mathbf{Ch}$. Notice that in this definition cochains are concentrated in non-positive degrees.
	
	\begin{remark}
		The singular cochains with $\mathbb F_2$-coefficients of a topological space $X$ coincide with $N^*(Sing(X))$, where $Sing(X)$ is the simplicial set of continuous maps from standard topological simplices to $X$.
	\end{remark}
	
	\subsection{The Alexander-Whitney and Eilenberg-Zilber maps}
	
	The functor of normalized chains does not preserve products, i.e., for a general pair of simplicial sets $X$ and $Y$ there is no isomorphism beetween $N_*(X \times Y)$ and $N_*(X) \otimes N_*(Y)$. Nevertheless, there is a canonical chain homotopy equivalence between them.
	
	The \textbf{Alexander-Whitney map}
	\begin{equation*}
	AW: N_*(X \times Y) \to N_*(X) \otimes N_*(Y)
	\end{equation*}
	is defined for $x \times y \in N_n(X \times Y)$ by
	\begin{equation*}
	AW(x \times y) = \sum_{i=0}^n d_{i+1} \cdots d_n\, x \otimes d_0 \cdots d_{i-1}\, y
	\end{equation*}
	and the \textbf{Eilenberg-Zilber map}
	\begin{equation*}
	EZ: N_*(X) \otimes N_*(Y) \to N_*(X \otimes Y)
	\end{equation*}
	is defined for $x \otimes y \in N_p(X) \otimes N_q(Y)$ by
	\begin{equation*}
	EZ(x \otimes y) = 
	\sum s_{v_p} \cdots s_{v_1}\, x \otimes s_{w_q} \cdots s_{w_1}\, y
	\end{equation*}
	where the sum is over all pairs of disjoint subsets
	\begin{equation*}
	\big( \{v_1 < \cdots < v_p\},\ \{w_1 < \cdots < w_q\} \big)
	\end{equation*}
	of $\{0,\dots,p+q-1\}$.
	
	It is well known that the compositions $AWEZ$ is equal to the identity and that the composition $EZAW$ is effectively chain homotopic to the identity. A recursive description for one such chain homotopy was first given in \cite{eilenberg1953groups}. A close formula for it, which we learned from \cite{real2000homological} and is credited therein to \cite{rubio1991homologie}, is given next.
	
	The \textbf{Shih homotopy}
	\begin{equation*}
	SHI: N_n(X \times Y) \to N_*(X \times Y)
	\end{equation*}	 
	is defined for $x \times y \in N_n(X \otimes Y)$ to be $0$ if $n = 0$ and if $n > 0$ by
	\begin{equation*}
	\begin{split}
	SHI(x \times y) = 
	\sum\ & s_{v_p + m} \cdots s_{v_1 + m} s_{m-1} d_{n-p+1} \cdots d_n  \,  x\ \otimes\\ 
	      & s_{w_{q+1} + m} \cdots s_{w_1 + m} d_{n-p-q} \cdots d_{n-p-1} \,  y
	\end{split}
	\end{equation*}
	where $m = n-p-q$ and the sum is over all pairs of disjoint subsets
	\begin{equation*}
	\big( \{v_1 < \cdots < v_{p}\},\ \{w_1 < \cdots < w_{q+1}\} \big)
	\end{equation*}
	of $\{0,\dots,p+q\}$ with $0 \leq p \leq n-1$ and $0 \leq q \leq n-p-1$.
	
	\subsection{Group actions and algebras over operads}
	
	Let $\Gamma$ be a group. A $\Gamma$-action on an object $C$ is a group morphism 
	\begin{equation} \label{equation: group action}
	\Gamma \to \mathrm{Aut}(C)
	\end{equation}
	where the group structure on $\mathrm{Aut}(C)$ is given by composition.
	
	Let $\mathcal{O}$ be an operad. (See for example \cite{loday2012operads}.) An $\mathcal{O}$-algebra structure on an object $C$ is an operad morphism
	\begin{equation} \label{equation: algebra over an operad}
	\mathcal{O} \to \mathrm{End}(C)
	\end{equation} 
	where the operad structure on $\mathrm{End}(C) = \{\mathrm{Hom}(C^{\otimes n}, C)\}_{n \geq 0}$ is induced from composition of linear maps and transpositions of factors.
	
	Let $C$ be an object with a $\Gamma$-action or an $\mathcal{O}$-algebra structure. We identify the elements of $\Gamma$ or $\mathcal{O}$ with their images via $(\ref{equation: group action})$ or $(\ref{equation: algebra over an operad})$.
	
	\subsection{The Barratt-Eccles operad $\mathcal E$}
	
	We review from \cite{berger04combinatorial} the Barratt-Eccles operad. 
	
	For a positive integer $r$ let $\Sigma_r$ be the group of permutations of $r$ elements and
	\begin{equation*}
	\bm{\circ}_{\Sigma} : \Sigma_r \times \Sigma_{s_1} \times \cdots \times \Sigma_{s_r} \ \to \
	\Sigma_{s_1 + \cdots + s_{r}} 
	\end{equation*}
	the usual composition of permutations.
	
	For a positive integer $r$ define the simplicial set $E(r)$ by 
	\begin{equation*}
	\begin{split}
	E(r)_n &= \{ (\sigma_0, \dots, \sigma_n)\ |\ \sigma_i \in \Sigma_r\}, \\
	d_i(\sigma_0, \dots, \sigma_n) &= (\sigma_0, \dots, \widehat{\sigma}_i, \dots, \sigma_n), \\
	s_i(\sigma_0, \dots, \sigma_n) &= (\sigma_0, \dots, \sigma_i, \sigma_i, \dots, \sigma_n). \\
	\end{split}
	\end{equation*}
	We consider $E(r)$ equipped with the action of $\Sigma_r$ given by 
	\begin{equation*}
	\sigma \big((\sigma_0, \dots, \sigma_n)\big) = (\sigma\sigma_0, \dots, \sigma\sigma_n).
	\end{equation*} 
	Let
	\begin{equation*}
	\bm{\circ}_{E}: E(r) \times E(s_1) \times \cdots \times E(s_r) \to E(s_1 + \cdots + s_r)
	\end{equation*}
	be defined by applying $\bm{\circ}_{\Sigma}$ coordinatewise. 
	
	For $r \geq 0$, let
	\begin{equation*}
	\mathcal E(r) = N_*(E(r))
	\end{equation*}
	be edowed with the induced $\Sigma_r$-action. Define an operadic compositions $\bm{\circ}_{\mathcal E}$ on $\mathcal E = \{\mathcal E(r)\}_{r \geq 0}$ by
	\begin{center}
		\begin{tikzcd}
		\mathcal E(r) \otimes \mathcal E(s_1) \otimes \cdots \otimes \mathcal E(s_r)  \ar[r, "EZ^{r}"] \ar[dr, in = 180, out = -40, "\bm{\circ}_{\mathcal E}"]&
		N_* \big( E(r) \otimes E(s_1) \otimes \cdots \otimes E(s_r) \big) \ar[d, "N_*(\bm{\circ}_{E})"] \\ &
		\mathcal E(s_1 + \cdots + s_r)
		\end{tikzcd}
	\end{center}
	where $EZ^{r}$ stand recursively for $EZ(\mathrm{id} \otimes EZ^{r-1})$ with $EZ^1 = EZ$.
	
	The resulting operad $\mathcal E$ is referred to as the Barratt-Eccles operad. It is a model in the category $\mathbf{Ch}$ for the $E_\infty$-operad. That is to say, $E(0)= \mathbb F_2$ and, for $r>0$, $E(r)$ is a resolution of $\mathbb F_2$ by free $\mathbb F_2[\Sigma_r]$-modules.
	
	\section{Cartan coboundaries for $\mathcal E$-algebras}
	
	\subsection{Steenrod cup-$i$ products and Cartan coboundaries}
	
	Since $\mathcal E$ is an $E_\infty$-operad, the orbit complex $\mathcal E(2)_{\Sigma_2}$ is an algebraic model for $K(\mathbb F_2,1)$, and $H_i(\mathcal E(2)_{\Sigma_2})$ is generated as an $\mathbb F_2$-module by the orbit of the element
	\begin{equation*}
	\tilde{x}_i = \big(e, (12), e, \dots, (12)^i \big).
	\end{equation*}
	
	\begin{definition}
		Let $A$ be an $\mathcal{E}$-algebra. The \textbf{cup-$i$ product} of $A$ is the image of $\tilde{x}_i$ in $\mathrm{Hom}(A \otimes A, A)$. We use the notation
		\begin{equation*}
		\tilde{x}_i (\alpha \otimes \beta) = \alpha \smallsmile_i \beta.
		\end{equation*}
		A \textbf{Cartan $i$-coboundary} is any map $\zeta_i$ in $\mathrm{Hom}(A \otimes A, A)$ that satisfies 
		\begin{equation*}
		(\partial \zeta_i)(\alpha \otimes  \beta) = 
		(\alpha \smallsmile_0 \beta) \smallsmile_i (\alpha \smallsmile_0 \beta)\ + \sum_{i=j+k} (\alpha \smallsmile_j \alpha) \smallsmile_0 (\beta \smallsmile_k \beta).
		\end{equation*}
	\end{definition}
	
	\begin{definition}
		The chain maps $F,\, G : \mathcal E(2) \rightarrow \mathcal E(4)$ are defined by
		\begin{equation*}
		\begin{split}
		F(\sigma_0,\dots,\sigma_n) =\ &  \bm{\circ}_{\mathcal E} \big( (\sigma_0,\dots,\sigma_n) \otimes \tilde{x}_0 \otimes \tilde{x}_0 \big) \\
		G(\sigma_0,\dots,\sigma_n) =\ &  \bm{\circ}_{\mathcal E} \big( \tilde{x}_0 \otimes AW (\sigma_0, \dots ,\sigma_n)^{\otimes 2} \big)
		\end{split}
		\end{equation*}
		for any basis element $(\sigma_0, \dots, \sigma_n)$.
	\end{definition}
	
	\begin{lemma} \label{lemma: value of (23)F and G on aabb}
		Let $A$ be an $\mathcal E$-algebra. For $\alpha, \beta \in A$ and $i \geq 0$ we have
		\begin{equation} \label{equation: (23)F(x_i) aabb = ... }
		(23) F (\tilde{x}_i) (\alpha \otimes \alpha \otimes \beta \otimes \beta ) = (\alpha \smallsmile_0 \beta) \smallsmile_i (\alpha \smallsmile_0 \beta)
		\end{equation}
		and
		\begin{equation} \label{equation: G(x_i) = ... }
		G(\tilde{x}_i) (\alpha \otimes \alpha \otimes \beta \otimes \beta) =	\sum_{i=j+k} (\alpha \smallsmile_j \alpha) \smallsmile_0 (\beta \smallsmile_k \beta).
		\end{equation}
	\end{lemma}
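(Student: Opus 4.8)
The plan is to derive both identities from the single fact that an $\mathcal E$-algebra structure is, by definition, a morphism of operads $\theta \colon \mathcal E \to \mathrm{End}(A)$. Such a morphism intertwines the operadic compositions and is equivariant for the symmetric group actions; in particular, for $x, y_1, y_2 \in \mathcal E(2)$,
\begin{equation*}
\theta\big(\bm{\circ}_{\mathcal E}(x \otimes y_1 \otimes y_2)\big) = \theta(x) \circ \big(\theta(y_1) \otimes \theta(y_2)\big),
\end{equation*}
and $\theta(\sigma \cdot z)$ equals $\theta(z)$ precomposed with the permutation of tensor factors determined by $\sigma$. Invoking this property lets me ignore the Eilenberg--Zilber maps built into $\bm{\circ}_{\mathcal E}$: all of that bookkeeping is absorbed by $\theta$, and I am left only to record which cup-$i$ operation lands in which slot.

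For the first identity I would expand $F(\tilde{x}_i) = \bm{\circ}_{\mathcal E}(\tilde{x}_i \otimes \tilde{x}_0 \otimes \tilde{x}_0)$ and apply $\theta$, obtaining the element of $\mathrm{Hom}(A^{\otimes 4}, A)$ that sends $\gamma_1 \otimes \gamma_2 \otimes \gamma_3 \otimes \gamma_4$ to $(\gamma_1 \smallsmile_0 \gamma_2) \smallsmile_i (\gamma_3 \smallsmile_0 \gamma_4)$. Precomposing with $(23)$ exchanges the second and third inputs, so evaluation on $\alpha \otimes \alpha \otimes \beta \otimes \beta$ yields $(\alpha \smallsmile_0 \beta) \smallsmile_i (\alpha \smallsmile_0 \beta)$. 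This step is purely formal once the convention for the $\Sigma_4$-action is fixed.

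The second identity requires an explicit computation. The symbol $AW(\sigma_0,\dots,\sigma_n)^{\otimes 2}$ denotes the Alexander--Whitney coproduct of $\mathcal E(2)$, namely $AW$ applied to the diagonal $(\sigma_0,\dots,\sigma_n) \times (\sigma_0,\dots,\sigma_n)$. Evaluating it on $\tilde{x}_i = (e,(12),\dots,(12)^i)$ via the face-map description of $AW$, where $d_{j+1}\cdots d_i(\tilde{x}_i) = \tilde{x}_j$ and $d_0\cdots d_{j-1}(\tilde{x}_i) = ((12)^j,\dots,(12)^i)$, I would obtain
\begin{equation*}
AW\big(\tilde{x}_i^{\otimes 2}\big) = \sum_{j=0}^{i} \tilde{x}_j \otimes (12)^j\, \tilde{x}_{i-j},
\end{equation*}
the factor $(12)^j$ recording that the tail $((12)^j,\dots,(12)^i)$ equals $\tilde{x}_{i-j}$ when $j$ is even and its $\Sigma_2$-translate when $j$ is odd. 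Substituting into $\bm{\circ}_{\mathcal E}(\tilde{x}_0 \otimes - \otimes -)$ and applying $\theta$ produces the map sending $\gamma_1 \otimes \gamma_2 \otimes \gamma_3 \otimes \gamma_4$ to $\sum_{j}(\gamma_1 \smallsmile_j \gamma_2) \smallsmile_0 \big((12)^j \tilde{x}_{i-j}\big)(\gamma_3 \otimes \gamma_4)$.

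The crux---and the reason the statement holds on the nose---is the parity twist $(12)^j$, which on a general tensor would spoil the indexing. Evaluating instead on $\alpha \otimes \alpha \otimes \beta \otimes \beta$, the last two inputs coincide, and since the $\Sigma_2$-action merely permutes these two equal inputs one has $\big((12)^j \tilde{x}_{i-j}\big)(\beta \otimes \beta) = \beta \smallsmile_{i-j} \beta$ for every $j$. Writing $k = i-j$ then collapses the sum to $\sum_{j+k=i}(\alpha \smallsmile_j \alpha) \smallsmile_0 (\beta \smallsmile_k \beta)$, which is the second identity. I would present the coproduct computation as its own display and be careful to justify---rather than assume---the $\Sigma_2$-equivariance that makes the factor $(12)^j$ disappear.
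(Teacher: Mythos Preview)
Your argument is correct and follows essentially the same route as the paper: both identities are obtained by unwinding the operadic composition via the algebra structure map, computing $AW(\tilde{x}_i^{\otimes 2}) = \sum_j \tilde{x}_j \otimes (12)^j \tilde{x}_{i-j}$ for the second, and absorbing the factor $(12)^j$ using $\beta\otimes\beta = (12)(\beta\otimes\beta)$. Your write-up is in fact slightly more explicit than the paper's about why the $(12)^j$ disappears, which is a welcome clarification.
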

	
	\begin{proof}
		For (\ref{equation: (23)F(x_i) aabb = ... }) we compute
		\begin{equation*}
		\begin{split}
		(23) F (\tilde{x}_i)(\alpha \otimes \beta \otimes \alpha \otimes \beta ) &=  
		(23) \bm{\circ}_{\mathcal E} \big( \tilde{x}_i \otimes \tilde{x}_0 \otimes \tilde{x}_0 \big)(\alpha \otimes \alpha \otimes \beta \otimes \beta ) \\ &= 
		\bm{\circ}_{\mathcal E} \big( \tilde{x}_i \otimes \tilde{x}_0 \otimes \tilde{x}_0 \big)(\alpha \otimes \beta \otimes \alpha \otimes \beta ) \\ &= 
		\tilde{x}_i \big( \tilde{x}_0 (\alpha \otimes \beta) \otimes \tilde{x}_0 (\alpha \otimes \beta) \big) \\ &=
		(\alpha \smallsmile_0 \beta) \smallsmile_i (\alpha \smallsmile_0 \beta).
		\end{split}
		\end{equation*}
		For (\ref{equation: G(x_i) = ... }) we notice that
		\begin{equation*}
		AW( \tilde{x}_i \otimes \tilde{x}_i) = \sum_{j=0}^i \tilde{x}_j \otimes (12)^j \tilde{x}_{i-j}
		\end{equation*} 
		and compute
		\begin{equation*}
		\begin{split}
		G (\tilde{x}_i)( \alpha  \otimes \alpha \otimes \beta \otimes \beta ) &=  
		\sum_{j=0}^i \bm{\circ}_{\mathcal E} \big( \tilde{x}_0 \otimes  \tilde{x}_j \otimes (12)^j \tilde{x}_{i-j} \big)( \alpha  \otimes \alpha \otimes \beta \otimes \beta ) \\ &= 
		\sum_{j=0}^i \tilde{x}_0 \big( \tilde{x}_j (\alpha \otimes \alpha) \otimes \tilde{x}_{i-j} (\beta \otimes \beta) \big) \\ &=
		\!\!\!\sum_{i=j+k} (\alpha \smallsmile_j \alpha) \smallsmile_0 (\beta \smallsmile_k \beta)
		\end{split}
		\end{equation*}
		as desired.
	\end{proof}
	
	\begin{lemma} \label{lemma: conditions for a cartan coboundary}
		Let $H : \mathcal E(2) \rightarrow \mathcal E(4)$ be a chain homotopy between $(23) \circ F$ and $G$ satisfying
		\begin{equation*}
		H \circ (12) = (12)(34) \circ H.
		\end{equation*}
		Then, for any $\mathcal E$-algebra $A$ and $i \geq 0$, the map $\zeta_i: A \otimes A \to A$ defined by 
		\begin{equation*}
		\zeta_i(\alpha \otimes \beta) = H(\tilde{x}_i)(\alpha \otimes \alpha \otimes \beta \otimes \beta)
		\end{equation*}
		is a Cartan $i$-coboundary.
	\end{lemma}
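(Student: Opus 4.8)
The plan is to read the hypothesis that $H$ is a chain homotopy between $(23)\circ F$ and $G$ as the single identity $\partial H = (23)\circ F + G$ in the chain complex $\mathrm{Hom}(\mathcal E(2), \mathcal E(4))$, where $\partial H = \partial_{\mathcal E(4)} \circ H + H \circ \partial_{\mathcal E(2)}$. Evaluating this identity on the generator $\tilde x_i$ and solving for the first summand gives
\begin{equation*}
\partial_{\mathcal E(4)}\big(H(\tilde x_i)\big) = (23)F(\tilde x_i) + G(\tilde x_i) + H\big(\partial_{\mathcal E(2)}\tilde x_i\big).
\end{equation*}
I would then transport this identity in $\mathcal E(4)$ to one in $A$ by evaluating on $\alpha \otimes \alpha \otimes \beta \otimes \beta$, so that the first two summands on the right become, by Lemma \ref{lemma: value of (23)F and G on aabb}, exactly the two terms of the Cartan formula.

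The first computation to carry out is the boundary of $\tilde x_i = (e, (12), \dots, (12)^i)$ inside $\mathcal E(2) = N_*(E(2))$. Applying $\partial = \sum_j d_j$ and discarding degenerate simplices, every interior face $d_j$ with $0 < j < i$ makes the entries $(12)^{j-1}$ and $(12)^{j+1} = (12)^{j-1}$ adjacent and hence vanishes in the normalized chains, while the two outer faces give
\begin{equation*}
\partial_{\mathcal E(2)}\, \tilde x_i = \big(1 + (12)\big)\, \tilde x_{i-1},
\end{equation*}
with the convention $\tilde x_{-1} = 0$. Substituting this into the displayed identity expresses $\partial_{\mathcal E(4)}(H\tilde x_i)$ as the sum of $(23)F(\tilde x_i)$, of $G(\tilde x_i)$, and of the correction term $H\big((1+(12))\tilde x_{i-1}\big)$.

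The crux of the argument is that this correction term vanishes once evaluated on $\alpha \otimes \alpha \otimes \beta \otimes \beta$. Using the equivariance hypothesis $H \circ (12) = (12)(34) \circ H$, I would rewrite $H\big((12)\tilde x_{i-1}\big) = (12)(34)\, H(\tilde x_{i-1})$; since the permutation $(12)(34)$ acts on $\mathrm{Hom}(A^{\otimes 4}, A)$ by transposing the first two inputs and the last two, and these inputs are the repeated arguments $\alpha,\alpha$ and $\beta,\beta$, its evaluation on $\alpha \otimes \alpha \otimes \beta \otimes \beta$ agrees with that of $H(\tilde x_{i-1})$. Over $\mathbb F_2$ the two contributions coming from $1 + (12)$ therefore cancel, so $H\big((1+(12))\tilde x_{i-1}\big)(\alpha \otimes \alpha \otimes \beta \otimes \beta) = 0$.

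It remains to identify the left-hand side with the coboundary of the cochain $\zeta_i(\alpha \otimes \beta)$. Because the $\mathcal E$-algebra structure map $\mathcal E(4) \to \mathrm{Hom}(A^{\otimes 4}, A)$ is a chain map, the image of $\partial_{\mathcal E(4)}(H\tilde x_i)$ equals $\partial_A \circ \big(H\tilde x_i\big) + \big(H\tilde x_i\big) \circ \partial_{A^{\otimes 4}}$; evaluating on $\alpha \otimes \alpha \otimes \beta \otimes \beta$ and using that $\alpha, \beta$ are cocycles, so that $\partial(\alpha \otimes \alpha \otimes \beta \otimes \beta) = 0$, kills the second term and leaves precisely $\partial\big(\zeta_i(\alpha \otimes \beta)\big)$. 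Combining this with the vanishing of the correction term and with Lemma \ref{lemma: value of (23)F and G on aabb} yields the defining equation of a Cartan $i$-coboundary. I expect the main obstacle to be the bookkeeping of the last two paragraphs: one must check that the symmetry condition is exactly what forces the correction term $H\big(\partial_{\mathcal E(2)}\tilde x_i\big)$ to disappear on the diagonal-type input $\alpha \otimes \alpha \otimes \beta \otimes \beta$, and that the passage from an identity in $\mathcal E(4)$ to one in $A$ is legitimate only because $\alpha$ and $\beta$ are cocycles.
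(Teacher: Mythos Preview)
Your proposal is correct and follows essentially the same argument as the paper: evaluate the identity $\partial H = (23)\circ F + G$ at $\tilde x_i$, compute $\partial\tilde x_i = (1+(12))\tilde x_{i-1}$, use the equivariance hypothesis to kill the resulting correction term on $\alpha\otimes\alpha\otimes\beta\otimes\beta$, and invoke Lemma~\ref{lemma: value of (23)F and G on aabb}. You are in fact more explicit than the paper in the final step, where you spell out that the structure map $\mathcal E(4)\to\mathrm{Hom}(A^{\otimes 4},A)$ is a chain map and that the cocycle condition on $\alpha,\beta$ is what allows one to identify $\partial_{\mathcal E(4)}(H\tilde x_i)(\alpha\otimes\alpha\otimes\beta\otimes\beta)$ with $\partial\big(\zeta_i(\alpha\otimes\beta)\big)$; the paper leaves this passage tacit.
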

	
	\begin{proof}
		By assumption
		\begin{equation*}
		\partial H = \partial \circ H + H \circ  \partial = (23) \circ F + G
		\end{equation*}
		so evaluating on $\tilde{x}_i$ we get
		\begin{equation} \label{pancito}
		\partial (H(\tilde{x}_i)) + H \big( (12)\tilde{x}_{i-1} + \tilde{x}_{i-1} \big) = (23) F(\tilde{x}_i) + G(\tilde{x}_i).
		\end{equation}
		Since $ \big((12)(34) + \mathrm{id}\big) (\alpha \otimes \alpha \otimes \beta \otimes \beta) = 0$, the equivariance assumption implies
		\begin{equation*}
		H \big( (12)\tilde{x}_{i-1} + \tilde{x}_{i-1} \big) (\alpha \otimes \alpha \otimes \beta \otimes \beta) = 0.
		\end{equation*}
		Evaluating (\ref{pancito}) on $(\alpha \otimes \alpha \otimes \beta \otimes \beta)$ and using Lemma \ref{lemma: value of (23)F and G on aabb} we obtain
		\begin{equation*}
		(\partial \zeta_i)(\alpha \otimes \beta) = 
		(\alpha \smallsmile_0 \beta) \smallsmile_i (\alpha \smallsmile_0 \beta) +
		\sum_{i=j+k} (\alpha \smallsmile_j \alpha) \smallsmile_0 (\beta \smallsmile_k \beta)
		\end{equation*}
		as desired.
	\end{proof}
	
	\subsection{Statement of the main theorem}
	
	We now describe the construction of a chain homotopy as stated in \mbox{Lemma \ref{lemma: conditions for a cartan coboundary}}.
	
	The first step of our construction is motivated by the well known fact that two group homomorphisms that are conjugate of each other induce homotopic maps of classifying spaces.
	
	Let us consider the three group inclusions
	\begin{equation*}
	i_1(\sigma)  = \sigma \times e \qquad
	i_2(\sigma) = e \times \sigma \qquad
	D(\sigma)   = \sigma \times \sigma.
	\end{equation*}
	of $\Sigma_2$ into $\Sigma_2 \times \Sigma_2$.
	
	\begin{definition}
		The homomorphisms $f, g : \Sigma_2 \to \Sigma_4$ are defined by
		\begin{align*}
		f &= \bm{\circ}_{\Sigma} \circ (\mathrm{id} \times D)\circ  i_1 \\
		g &= \bm{\circ}_{\Sigma} \circ (\mathrm{id} \times D)\circ  i_2
		\end{align*}
		or, more explicitly, by
		\begin{align} \label{equation: explicit values of f and g}
		f(e) &= e \qquad f(12) = (13)(24) \\ 
		g(e) &= e \qquad g(12) = (12)(34).
		\end{align} 
	\end{definition}
	
	We notice that the maps $f$ and $g$ are conjugated by $(23)$ in $\Sigma_4$, i.e., for $\sigma \in \Sigma_2$
	\begin{equation*}
	(23) f(\sigma) = g(\sigma) (23).
	\end{equation*}
	
	\begin{definition}
		Let $F,G : E(2) \to E(4)$ be the simplicial maps induced from $f$ and $g$ respectively. Explicitly,
		\begin{align*}
		& E(f)(\sigma_0, \dots, \sigma_n) = (f\sigma_0, \dots, f\sigma_n) \\
		& E(g)(\sigma_0, \dots, \sigma_n) = (g\sigma_0, \dots, g\sigma_n)
		\end{align*}
		for any $(\sigma_0, \dots, \sigma_n) \in E(2)$.
	\end{definition} 
	
	We introduce a linear map $H_1$ which, as we will see later, is an appropriately equivariant chain homotopy between $N_*(E(f))$ and $N_*(E(g))$.
	
	\begin{definition}
		Let $H_1 : \mathcal{E}(2) \to \mathcal{E}(4)$ be the degree $1$ linear map defined on basis elements by
		\begin{equation*}
		H_1 (\sigma_0, \dots, \sigma_n) = 
		\sum _{i=0}^n \big( (23) f \sigma_0, \dots,  (23) f \sigma_i, g \sigma_i, \dots, g \sigma_n \big).
		\end{equation*}
	\end{definition}
	
	The second step relates the maps $F$ and $N_*(E(f))$, as well as  $G$ and $N_*(E(g))$. It turns out, as we will see later, that the maps in the first pair are equal, whereas the maps in the second are chain homotopic via an appropriately equivariant map that we now introduce.
	
	\begin{definition}
		Let $H_2 : \mathcal E(2) \to \mathcal E(4)$ be the degree $1$ linear map defined on basis elements by
		\begin{equation*}
		H_2 (\sigma_0, \dots, \sigma_n) = N_*(\bm{\circ}_{E}) \big( (e, \dots, e) \otimes SHI (\sigma_0, \dots, \sigma_n)^{\otimes 2} \big).
		\end{equation*}
	\end{definition}
	
	We are ready to state our main result.
	\begin{theorem} \label{theorem: main}
		Let $A$ be an $\mathcal{E}$-algebra. For any $i \geq 0$ the map $\zeta_i$ in $\mathrm{Hom}(A \otimes A, A)$ defined by
		\begin{equation*}
		\zeta_i(\alpha \otimes \beta) = (H_1 + H_{2})(\tilde{x}_i)(\alpha \otimes \alpha \otimes \beta \otimes \beta)
		\end{equation*}
		is a Cartan $i$-coboundary.
	\end{theorem}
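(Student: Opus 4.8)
The plan is to verify that the single map $H = H_1 + H_2$ satisfies the two hypotheses of Lemma~\ref{lemma: conditions for a cartan coboundary}, namely that it is a chain homotopy from $(23)\circ F$ to $G$ and that it is equivariant in the sense $H\circ(12) = (12)(34)\circ H$. Once both are established the theorem follows at once, since then $\zeta_i(\alpha\otimes\beta) = H(\tilde{x}_i)(\alpha\otimes\alpha\otimes\beta\otimes\beta)$ is a Cartan $i$-coboundary by that lemma. Throughout I write $\Delta$ for the simplicial diagonal $E(2)\to E(2)\times E(2)$, so that $AW(\sigma_0,\dots,\sigma_n)^{\otimes 2} = AW(\Delta(\sigma_0,\dots,\sigma_n))$ and similarly for $SHI$.

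For the homotopy relation I would factor the passage from $(23)\circ F$ to $G$ through the simplicial maps $N_*(E(f))$ and $N_*(E(g))$, proving the three identities $F = N_*(E(f))$, $\partial H_1 = (23)\circ F + N_*(E(g))$, and $\partial H_2 = N_*(E(g)) + G$; adding them, the two copies of $N_*(E(g))$ cancel over $\mathbb F_2$ and yield $\partial H = (23)\circ F + G$. The first identity I would obtain by unwinding $\bm{\circ}_{\mathcal E} = N_*(\bm{\circ}_E)\circ EZ^2$: because the inner inputs $\tilde{x}_0 = (e)$ are $0$-simplices, the Eilenberg--Zilber factor only degenerates $(\sigma_0,\dots,\sigma_n)$, and coordinatewise $\bm{\circ}_\Sigma(\sigma_k, e, e) = f(\sigma_k)$, so $F(\sigma_0,\dots,\sigma_n) = (f\sigma_0,\dots,f\sigma_n)$. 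The second identity I would read off by recognizing $H_1$ as the standard prism homotopy between the simplicial maps $(23)\circ N_*(E(f))$ and $N_*(E(g))$, whose telescoping boundary leaves exactly the two endpoint maps once the interior terms with a repeated entry vanish in the normalization.

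The third identity is where the work concentrates, and I expect it to be the main obstacle. Setting $\Theta(z) = N_*(\bm{\circ}_E)\,EZ\big((e)\otimes z\big)$, which is a chain map, one has $G = \Theta\circ (EZ\circ AW)\circ\Delta$ and $H_2 = \Theta\circ SHI\circ\Delta$, while the same $0$-simplex computation as above (now with the inner slots carrying $\sigma_k,\sigma_k$ and the outer slot $e$) gives $\Theta\circ\Delta = N_*(E(g))$. Since $\Theta$ and $\Delta$ are chain maps, the Hom-differential slides past them, so that, using the defining property $\partial\,SHI + SHI\,\partial = EZ\circ AW + \mathrm{id}$ of the Shih homotopy,
\[
\partial H_2 = \Theta\circ(\partial\,SHI + SHI\,\partial)\circ\Delta = \Theta\circ(EZ\circ AW + \mathrm{id})\circ\Delta = G + N_*(E(g)).
\]
The delicate points here are the correct unwinding of $\bm{\circ}_{\mathcal E}$ and the bookkeeping of which maps commute with $\partial$, so that the Shih homotopy identity can be applied cleanly.

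Finally, for the equivariance I would treat the two summands separately. For $H_1$ the inputs are that $f$ is a homomorphism and that $f$ and $g$ are conjugate by $(23)$, which give, on each coordinate, $g((12)\sigma) = (12)(34)\,g\sigma$ and $(23)f((12)\sigma) = (12)(34)(23)f\sigma$; substituting into the defining sum yields $H_1\circ(12) = (12)(34)\circ H_1$ termwise. For $H_2$ I would invoke naturality of $SHI$ together with the equivariance of $\Delta$, which intertwines $(12)$ with $(12)\times(12)$, and of $\bm{\circ}_E$, which converts the doubled action $(12)\times(12)$ on the inner slots into $(12)(34)$ in $\Sigma_4$; this gives $H_2\circ(12) = (12)(34)\circ H_2$. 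Adding the two, $H = H_1 + H_2$ meets the equivariance hypothesis, and Lemma~\ref{lemma: conditions for a cartan coboundary} finishes the proof. The identity $F = N_*(E(f))$, the prism boundary, and these equivariance checks all reduce to finite verifications on $\Sigma_2$ and $\Sigma_4$; the one genuinely substantive step is the Shih-homotopy computation of $\partial H_2$.
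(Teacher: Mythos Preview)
Your proposal is correct and follows essentially the same architecture as the paper: you reduce to Lemma~\ref{lemma: conditions for a cartan coboundary} and establish its two hypotheses via exactly the four auxiliary facts the paper isolates as Lemmas~\ref{lemma: boundary of H_1}--\ref{lemma: equivariance of H_2} (the identity $F=N_*(E(f))$, the prism/telescoping computation of $\partial H_1$, the Shih-homotopy computation of $\partial H_2$, and the two equivariance checks). Your packaging of the $\partial H_2$ step through the auxiliary chain map $\Theta$ is a clean way to see why the Shih identity applies, but it is the same computation the paper performs inline.
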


	\subsection{Proof of the main theorem}
	
	We will prove Theorem \ref{theorem: main} after four lemmas. 
	
	\begin{lemma} \label{lemma: boundary of H_1}
		In $\mathrm{Hom}\big( \mathcal E(2), \mathcal E(4) \big)$
		\begin{equation*}
		\partial  H_1 = (23) \circ N_*(E(f)) + N_*(E(g)).
		\end{equation*}
	\end{lemma}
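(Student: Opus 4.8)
The plan is to unpack the definition $\partial H_1 = \partial \circ H_1 + H_1 \circ \partial$ (with no signs, since everything is over $\mathbb{F}_2$) and to recognize the right-hand side as the two surviving endpoints of a telescoping sum. To keep the bookkeeping readable I would abbreviate $a_j = (23)f\sigma_j$ and $b_j = g\sigma_j$, so that
\[
H_1(\sigma_0, \dots, \sigma_n) = \sum_{i=0}^n (a_0, \dots, a_i, b_i, \dots, b_n),
\]
each summand being an $(n+1)$-simplex of $E(4)$, while the target to be matched reads
\[
\big((23)\circ N_*(E(f)) + N_*(E(g))\big)(\sigma_0,\dots,\sigma_n) = (a_0,\dots,a_n) + (b_0,\dots,b_n).
\]

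First I would compute $\partial \circ H_1$ by applying $\partial = \sum_j d_j$ to each staircase simplex $(a_0,\dots,a_i,b_i,\dots,b_n)$ and sorting the resulting faces into two families relative to the junction $a_i \mid b_i$. The \emph{off-diagonal} faces delete an entry away from the junction; deleting the entry coming from $\sigma_k$ produces exactly the staircase associated to the face $d_k(\sigma_0,\dots,\sigma_n)$, so that summing over all levels $i$ these faces assemble precisely into $H_1 \circ \partial(\sigma_0,\dots,\sigma_n)$ and cancel against it term by term. The \emph{diagonal} faces are the two touching the junction: $d_i$ removes $a_i$, giving $(a_0,\dots,a_{i-1},b_i,\dots,b_n)$, and $d_{i+1}$ removes $b_i$, giving $(a_0,\dots,a_i,b_{i+1},\dots,b_n)$.

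The heart of the argument is the telescoping of these diagonal faces. I would observe that the $d_{i+1}$-face at level $i$ coincides with the $d_{i+1}$-face at level $i+1$ (the latter deleting $a_{i+1}$), both equal to $(a_0,\dots,a_i,b_{i+1},\dots,b_n)$, so over $\mathbb{F}_2$ they cancel in consecutive pairs for $0 \le i \le n-1$. What survives is only the $d_0$-face at level $0$, which deletes $a_0$ from $(a_0,b_0,\dots,b_n)$ to yield $(b_0,\dots,b_n) = N_*(E(g))(\sigma_0,\dots,\sigma_n)$, together with the $d_{n+1}$-face at level $n$, which deletes $b_n$ from $(a_0,\dots,a_n,b_n)$ to yield $(a_0,\dots,a_n) = (23)N_*(E(f))(\sigma_0,\dots,\sigma_n)$. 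These two terms are exactly the claimed right-hand side.

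The step I expect to need the most care is normalization: the identity lives in $\mathcal{E}(4) = N_*(E(4))$, so I must check that $H_1$ is well defined on normalized chains and that no degenerate simplex disrupts the telescoping. The explicit values $(23)f(e) = (23) \ne e = g(e)$ and $(23)f(12) = (1243) \ne (12)(34) = g(12)$ show $a_i \ne b_i$ always, so the junction of every staircase simplex is non-degenerate; conceptually this reflects that $H_1$ is the prism operator of the simplicial homotopy from $(23)\cdot E(f)$ to $E(g)$ provided by the conjugacy $(23)f(\sigma) = g(\sigma)(23)$, and prism operators of simplicial homotopies are automatically compatible with the normalized quotient. Once this compatibility is recorded, the off-diagonal cancellation and diagonal telescoping above close the proof.
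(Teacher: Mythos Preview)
Your argument is correct and is essentially the same telescoping computation the paper gives: both recognise $H_1$ as the prism operator of the obvious simplicial homotopy from $(23)\cdot E(f)$ to $E(g)$ and watch the interior faces cancel in pairs. The only cosmetic difference is that the paper starts from $H_1\circ\partial$ and identifies the missing faces as $(d_i+d_{i+1})$ of each staircase simplex, while you start from $\partial\circ H_1$ and separate ``off-diagonal'' from ``diagonal'' faces; your added remark that $a_i\neq b_i$ so the staircase simplices are non-degenerate in $N_*(E(4))$ is a point the paper leaves implicit.
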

	
	\begin{proof}
		The essence of the proof is a telescopic sum argument. 
		\begin{equation*}
		\begin{split}
		H_1 \partial (\sigma_0, \dots, \sigma_n) & = H_1 \sum_{j=0}^n (\sigma_0, \dots, \widehat{\sigma}_j, \dots, \sigma_n) \\ &= 
		\sum_{j=0}^n \sum_{i < j} ((23) f \sigma_0, \dots, (23) f \sigma_i, g \sigma_i, \dots, \widehat{\sigma}_j, \dots, g \sigma_n) \\ &+
		\sum_{j=0}^n \sum_{i > j} ((23) f \sigma_0, \dots, \widehat{\sigma}_j, \dots (23) f \sigma_i, g \sigma_i, \dots, g \sigma_n) \\ &=
		\partial H_1 (\sigma_0, \dots, \sigma_n) \\ &+
		\sum_i (d_i + d_{i+1}) ((23) f \sigma_0, \dots, (23) f \sigma_i, g \sigma_i, \dots, g \sigma_n) \\ &=
		\partial H_1 (\sigma_0, \dots, \sigma_n) \\ &+
		(23) N_*(E(f)) (\sigma_0, \dots, \sigma_n) + N_*(E(g)) (\sigma_0, \dots, \sigma_n)
		\end{split}
		\end{equation*}
		for any basis element $(\sigma_0,\dots,\sigma_n) \in \mathcal E(2)$.
	\end{proof}	
	
	\begin{lemma} \label{lemma: equivariance of H_1}
		The map $H_1$ satisfies the following form of equivariance:
		\begin{equation} \label{equation: lemma: equivariance of H_1}
		H_1 \circ (12) = (12)(34) \circ H_1.
		\end{equation}
	\end{lemma}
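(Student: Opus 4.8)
The plan is to verify the identity directly on an arbitrary basis element $(\sigma_0, \dots, \sigma_n) \in \mathcal E(2)$, since $H_1$ and both group actions are linear. Recall that the $\Sigma_2$-action on $\mathcal E(2)$ and the $\Sigma_4$-action on $\mathcal E(4)$ are given by left multiplication in each coordinate, so that $(12)(\sigma_0, \dots, \sigma_n) = ((12)\sigma_0, \dots, (12)\sigma_n)$ and likewise $(12)(34)$ acts coordinatewise on $\mathcal E(4)$.

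First I would compute the left-hand side. Applying the definition of $H_1$ to $(12)(\sigma_0, \dots, \sigma_n)$ and using that $f$ and $g$ are group homomorphisms with $f(12) = (13)(24)$ and $g(12) = (12)(34)$ yields
\begin{equation*}
H_1 (12)(\sigma_0, \dots, \sigma_n) = \sum_{i=0}^n \big( (23)(13)(24) f\sigma_0, \dots, (23)(13)(24) f\sigma_i,\ (12)(34) g\sigma_i, \dots, (12)(34) g\sigma_n \big).
\end{equation*}
Then I would compute the right-hand side by left-multiplying each coordinate of $H_1(\sigma_0, \dots, \sigma_n)$ by $(12)(34)$, obtaining
\begin{equation*}
(12)(34) H_1(\sigma_0, \dots, \sigma_n) = \sum_{i=0}^n \big( (12)(34)(23) f\sigma_0, \dots, (12)(34)(23) f\sigma_i,\ (12)(34) g\sigma_i, \dots, (12)(34) g\sigma_n \big).
\end{equation*}

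Comparing the two expressions summand by summand, the entries in the ``$g$-block'' agree on the nose, while the entries in the ``$(23)f$-block'' agree precisely when $(23)(13)(24) = (12)(34)(23)$ in $\Sigma_4$. But this is exactly the conjugation relation $(23) f(12) = g(12)(23)$ recorded just after the definition of $f$ and $g$ (one checks that both products equal the common $4$-cycle sending $1 \mapsto 2 \mapsto 4 \mapsto 3 \mapsto 1$). Hence the two sums coincide term by term, which gives the claimed equivariance.

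The only genuine content is the single permutation identity $(23) f(12) = g(12)(23)$; everything else is the homomorphism property of $f$ and $g$ together with coordinatewise bookkeeping, so the sole (minor) obstacle is keeping the order of composition consistent when multiplying transpositions. Conceptually, the point is that the leftmost $(23)$ built into the $f$-block of $H_1$, combined with the conjugacy of $f$ and $g$, is exactly what converts the $f(12)$ factor produced by source-equivariance into the $g(12) = (12)(34)$ factor demanded by target-equivariance.
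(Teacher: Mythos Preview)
Your proof is correct and follows essentially the same route as the paper's: a direct summand-by-summand verification that reduces to the single conjugation identity $(23)f(12) = g(12)(23)$ in $\Sigma_4$. The paper organizes the computation slightly differently (rewriting each $(23)f\sigma_k$ as $(g\sigma_k)(23)$ first), but the content is the same.
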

	
	\begin{proof}
		Let $H_1^i$ be the degree $1$ linear map defined on basis elements by
		\begin{equation*}
		H_1^i(\sigma_0, \dots, \sigma_n) = ((23) f \sigma_0, \dots, (23) f \sigma_i, g \sigma_i, \dots, g \sigma_n).
		\end{equation*}
		Notice that $H_1 = \sum_i H_1^i$. We will verify (\ref{equation: lemma: equivariance of H_1}) by checking it for each $H_1^i$. Using (\ref{equation: explicit values of f and g}) we notice that for any $\sigma$ in $\Sigma_2$ we have
		\begin{align*}
		(23) f \sigma &= (g \sigma) (23) \\
		g \big( (12)\sigma \big) &= (12)(34) g \sigma.
		\end{align*} 
		Therefore,
		\begin{equation*}
		\begin{split}
		(12)(34) H_1^i &(\sigma_0, \dots, \sigma_n)  \\ = & \  
		(12)(34) \big( (23) f \sigma_0, \dots, (23) f \sigma_i, g \sigma_i, \dots, g \sigma_n \big) \\ = & \
		(12)(34) \big( g \sigma_0 (23), \dots,  g \sigma_i (23), g \sigma_i, \dots, g \sigma_n \big) \\ = & \
		\big( g((12) \sigma_0) (23), \dots,  g((12) \sigma_i) (23), g((12) \sigma_i), \dots, g((12) \sigma_n) \big) \\ = & \
		\big( (23) f((12)\sigma_0), \dots, (23) f((12)\sigma_i), g((12)\sigma_i), \dots, g((12)\sigma_n) \big) \\ = & \
		H_1^i (12)(\sigma_0, \dots, \sigma_n)
		\end{split}
		\end{equation*}
		for any basis element $(\sigma_0,\dots,\sigma_n) \in \mathcal E(2)$.
	\end{proof}
	
	\begin{lemma} \label{lemma: boundary of H_2}
		In $\mathrm{Hom}\big( \mathcal E(2), \mathcal E(4) \big)$
		\begin{equation*}
		 N_*(E(f)) =  F
		\end{equation*}
		and
		\begin{equation*}
		\partial H_2 =  N_*(E(g)) + G.
		\end{equation*}
	\end{lemma}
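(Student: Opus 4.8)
The plan is to realize all three maps appearing in the statement as composites of two fixed chain maps with a self-map of $N_*(E(2) \times E(2))$, and then to invoke the homotopy property of the Shih map quoted in the preliminaries. Write $\Delta : E(2) \to E(2) \times E(2)$ for the diagonal, and let $\lambda : E(2) \times E(2) \to E(4)$ be the simplicial map sending a pair $(a,b)$ to $\bm{\circ}_E(c, a, b)$, where $c$ denotes the degenerate simplex constant at $e \in \Sigma_2$ in the matching degree. Since $\bm{\circ}_E$ applies $\bm{\circ}_{\Sigma}$ coordinatewise and $\bm{\circ}_{\Sigma}(e, \tau, \rho)$ is the block permutation of $\{1,2,3,4\}$ acting by $\tau$ on $\{1,2\}$ and by $\rho$ on $\{3,4\}$, both $N_*(\lambda)$ and $N_*(\Delta)$ are chain maps, being induced by simplicial maps.

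For the first identity I would unwind $\bm{\circ}_{\mathcal E} = N_*(\bm{\circ}_E) \circ EZ^2$ applied to $(\sigma_0,\dots,\sigma_n) \otimes \tilde{x}_0 \otimes \tilde{x}_0$. Since $\tilde{x}_0 = (e)$ sits in degree $0$, the two inner tensor factors are carried by $EZ^2$ to constant simplices, producing the $n$-simplex $\big((\sigma_0,\dots,\sigma_n), (e,\dots,e), (e,\dots,e)\big)$ of $E(2)^{\times 3}$. Applying $N_*(\bm{\circ}_E)$ coordinatewise gives $\bm{\circ}_{\Sigma}(\sigma_j, e, e)$, which is exactly $f\sigma_j$ by the definition of $f$; hence $F(\sigma_0,\dots,\sigma_n) = (f\sigma_0,\dots,f\sigma_n) = N_*(E(f))(\sigma_0,\dots,\sigma_n)$.

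For the homotopy, the key observation is that whenever the outer operation is a degree-$0$ element, the operadic bookkeeping collapses to $N_*(\lambda)$. Concretely, I would check the three factorizations
\begin{equation*}
N_*(E(g)) = N_*(\lambda) \circ N_*(\Delta), \qquad
G = N_*(\lambda) \circ EZ \circ AW \circ N_*(\Delta), \qquad
H_2 = N_*(\lambda) \circ SHI \circ N_*(\Delta).
\end{equation*}
The first holds because $g\sigma_j$ is the block sum of $\sigma_j$ with itself, which is $\lambda$ of the diagonal read coordinatewise. The second follows by the same degree-$0$ collapse used above: with $\tilde{x}_0$ in the outer slot, $EZ^2$ turns $\tilde{x}_0 \otimes AW(\sigma_0,\dots,\sigma_n)^{\otimes 2}$ into the constant-extended product chain $EZ\big(AW(\cdots)\big)$ fed through $\lambda$, and $AW$ of the diagonal is $AW \circ N_*(\Delta)$. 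The third is the definition of $H_2$ once one recognizes $SHI$ applied to the diagonal as $SHI \circ N_*(\Delta)$ and the constant outer slot as $N_*(\lambda)$. Granting these, since $N_*(\lambda)$ and $N_*(\Delta)$ are chain maps,
\begin{equation*}
\partial H_2 = N_*(\lambda) \circ \big(\partial \circ SHI + SHI \circ \partial\big) \circ N_*(\Delta),
\end{equation*}
and the defining homotopy property of the Shih map, $\partial \circ SHI + SHI \circ \partial = \mathrm{id} + EZ\, AW$, immediately yields $\partial H_2 = N_*(E(g)) + G$.

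I expect the main obstacle to be precisely the bookkeeping that justifies the degree-$0$ collapse: one must verify carefully that $EZ^2$ applied to a tensor with a degree-$0$ factor in the relevant slot produces exactly the constant-simplex extension, so that $N_*(\bm{\circ}_E)$ reduces to $N_*(\lambda)$ (respectively to $N_*(E(f))$), and that the abbreviated notations $AW(\cdots)^{\otimes 2}$ and $SHI(\cdots)^{\otimes 2}$ genuinely encode $AW \circ N_*(\Delta)$ and $SHI \circ N_*(\Delta)$. Everything after that is formal: the chain-map property of $N_*(\lambda)$ and $N_*(\Delta)$ is automatic, and the homotopy identity for $SHI$ is quoted from the preliminaries.
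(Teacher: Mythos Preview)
Your proposal is correct and follows essentially the same approach as the paper: both arguments hinge on the degree-$0$ collapse of $EZ$ (so that the outer $\tilde{x}_0$ reduces $\bm{\circ}_{\mathcal E}$ to a simplicial composition with a constant first slot) and on the Shih identity $\partial\, SHI + SHI\,\partial = \mathrm{id} + EZ\,AW$. The paper carries this out by direct computation on basis elements, while you package the same steps through the auxiliary chain maps $N_*(\lambda)$ and $N_*(\Delta)$; this is a cosmetic reorganization rather than a different route, and your flagged ``bookkeeping'' points are exactly the implicit identifications the paper uses without naming them.
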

	
	\begin{proof}
		We have
		\begin{equation*}
		\begin{split}
		F(\sigma_0,\dots,\sigma_n) =& \,
		\bm{\circ}_{\mathcal E}\big( (\sigma_0,\dots,\sigma_n) \otimes \tilde{x}_0 \otimes \tilde{x}_0 \big) \\ =& \,
		N_* (\bm{\circ}_{E}) \big( (\sigma_0,\dots,\sigma_n) \otimes (e, \dots, e) \otimes (e, \dots, e) \big) \\ =& \, ( f \sigma_0, \dots, f \sigma_n ) = N_*(E(f))(\sigma_0,\dots,\sigma_n)
		\end{split}
		\end{equation*}
		and
		\begin{equation*}
		\begin{split}
		G (\sigma_0,\dots,\sigma_n) =& \,
		\bm{\circ}_{\mathcal E} \big( \tilde{x}_0 \otimes AW (\sigma_0,\dots,\sigma_n)^{\otimes 2} \big) \\ =& \,
		N_* (\bm{\circ}_{E}) \big( (e, \dots, e) \otimes EZAW (\sigma_0,\dots,\sigma_n)^{\otimes 2} \big) \\ =& \, 
		N_* (\bm{\circ}_{E}) \big( (e, \dots, e) \otimes (\sigma_0,\dots,\sigma_n)^{\otimes 2} \big) \\ +& \,
		N_* (\bm{\circ}_{E}) \big( (e, \dots, e) \otimes (\partial SHI) (\sigma_0,\dots,\sigma_n)^{\otimes 2} \big) \\ =& \, 
		\big( g\sigma_0, \dots, g\sigma_n \big)\ +\ (\partial H_2)(\sigma_0,\dots,\sigma_n) \\ =& \,
		N_*(E(g))(\sigma_0,\dots,\sigma_n) \ +\ (\partial H_2)(\sigma_0,\dots,\sigma_n)
		\end{split}
		\end{equation*}
		for any basis element $(\sigma_0,\dots,\sigma_n) \in \mathcal E(2)$.
	\end{proof}
	
	\begin{lemma} \label{lemma: equivariance of H_2}
		The map $H_2$ satisfies the following form of equivariance:
		\begin{equation*}
		H_2 \circ (12) = (12)(34) \circ H_2.
		\end{equation*}
	\end{lemma}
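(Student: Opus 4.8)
The plan is to obtain the identity from the naturality of the classical chain operators appearing in $H_2$ together with a single equivariance property of the operadic composition $\bm{\circ}_{E}$. Write $\ell : E(2) \to E(2)$ for the simplicial automorphism $\ell(\sigma_0,\dots,\sigma_n) = ((12)\sigma_0,\dots,(12)\sigma_n)$ inducing the action of $(12)$ on $\mathcal E(2)$, and $\lambda : E(4)\to E(4)$ for the coordinatewise left multiplication by $(12)(34)$ inducing the action of $(12)(34)$ on $\mathcal E(4)$. Unwinding the definition, $H_2(\sigma)$ is the result of applying to $\sigma$, in order, the diagonal $\Delta : E(2)\to E(2)\times E(2)$, the Shih homotopy $SHI$, the Eilenberg--Zilber map $EZ$ merging in the fixed tuple $(e,\dots,e)$ in the outer slot, and finally $N_*(\bm{\circ}_{E})$.

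First I would record that $\Delta$, $SHI$, and $EZ$ are all natural with respect to simplicial maps: for $\Delta$ this is the relation $\Delta\circ\ell = (\ell\times\ell)\circ\Delta$, and for $SHI$ and $EZ$ it follows because their closed formulas are assembled entirely from faces and degeneracies, which commute with every simplicial map. Replacing $\sigma$ by $\ell(\sigma)$ and pushing $\ell$ successively through $\Delta$, $SHI$, and $EZ$ therefore turns the argument of $N_*(\bm{\circ}_{E})$ from $EZ\big((e,\dots,e)\otimes SHI(\sigma\times\sigma)\big)$ into $N_*(\mathrm{id}\times\ell\times\ell)$ applied to the very same element; the outer tensor factor is left untouched because $\ell$ acts only on the two diagonal copies of $E(2)$.

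It then remains to verify the operadic identity $\bm{\circ}_{E}\circ(\mathrm{id}\times\ell\times\ell) = \lambda\circ\bm{\circ}_{E}$. Since $\bm{\circ}_{E}$ is $\bm{\circ}_{\Sigma}$ applied coordinatewise, this reduces to the block-diagonal equivariance $\bm{\circ}_{\Sigma}(\alpha;(12)\beta,(12)\gamma) = (12)(34)\,\bm{\circ}_{\Sigma}(\alpha;\beta,\gamma)$ for $\alpha,\beta,\gamma\in\Sigma_2$, which is a finite check and is an instance of the inner-slot equivariance of the operad for the block-diagonal subgroup $(12)\oplus(12)=(12)(34)$. Granting it, the chain of naturality identities yields $H_2(\ell\sigma) = N_*(\lambda)\,H_2(\sigma) = (12)(34)\,H_2(\sigma)$, as desired.

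I expect the genuine content of the argument to be exactly this last equivariance identity, namely the compatibility of the left $\Sigma_2$-action on the two inner operadic slots with the left $(12)(34)$-action on $E(4)$: concretely, that multiplying each inner slot by $(12)$ toggles the two transpositions $(12)$ and $(34)$ independently, so that doing both is the same as acting by their product on the composite. Everything else is formal naturality, the only bookkeeping being to confirm that the $EZ$ merging in the factor $(e,\dots,e)$, together with its degeneracies, keeps the outer slot trivial so that the equivariance identity is invoked only where it is needed.
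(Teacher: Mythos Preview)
Your argument is correct and follows essentially the same route as the paper: push the $(12)$-action through $SHI$ by naturality, then use the block-diagonal equivariance $\bm{\circ}_{\Sigma}(e;(12)\sigma,(12)\sigma') = (12)(34)\,\bm{\circ}_{\Sigma}(e;\sigma,\sigma')$ of the symmetric-groups operad to convert this into the left $(12)(34)$-action on $\mathcal E(4)$. Your write-up is a bit more expansive---you factor $H_2$ through $\Delta$ and an $EZ$ step and state the operadic identity for arbitrary outer $\alpha\in\Sigma_2$ rather than just $\alpha=e$---but these elaborations are harmless (the general identity does hold, since the block swap $(13)(24)$ commutes with $(12)(34)$), and the paper simply records the $\alpha=e$ case because the outer slot is always $(e,\dots,e)$.
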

	
	\begin{proof}
		Since for any $\sigma, \sigma' \in \Sigma_2$
		\begin{equation*}
		\bm{\circ}_{\bm{\Sigma}}\big( e, (12)\sigma, (12)\sigma' \big) = (12)(34) \bm{\circ}_{\bm{\Sigma}} ( e, \sigma, \sigma' )
		\end{equation*}
		we have
		\begin{equation*}
		\begin{split}
		H_2  (12) (\sigma_0, \dots, \sigma_n) = &\ 
		N_* (\bm{\circ}_{E}) \big( (e, \dots, e) \otimes SHI \big((12)\sigma_0, \dots, (12)\sigma_n \big)^{\otimes 2} \big) \\ = &\
		N_* (\bm{\circ}_{E}) \big( (e, \dots, e) \otimes \big((12) \otimes (12)\big) SHI \big(\sigma_0, \dots, \sigma_n \big)^{\otimes 2} \big) \\ = & \
		(12)(34)\, N_* (\bm{\circ}_{E}) \big( (e, \dots, e) \otimes SHI \big(\sigma_0, \dots, \sigma_n \big)^{\otimes 2} \big) \\ = & \
		(12)(34)\, H_2 (\sigma_0, \dots, \sigma_n)
		\end{split}
		\end{equation*}
		for any basis element $(\sigma_0, \dots, \sigma_n) \in \mathcal E(2)$.
	\end{proof}
	
	\begin{proof}[Proof of Theorem \ref{theorem: main}]
		We will show that $H = H_1 + H_2$ satisfies the hypothesis of Lemma \ref{lemma: conditions for a cartan coboundary}. Notice that the equivariance condition follows from Lemma \ref{lemma: equivariance of H_1} and Lemma \ref{lemma: equivariance of H_2}. To verify that $H$ is a chain homotopy between $(23) \circ F$ and $G$ we use Lemma \ref{lemma: boundary of H_1} and Lemma \ref{lemma: boundary of H_2}
		\begin{equation*}
		\begin{split}
		\partial H =\ & \partial H_1 + \partial H_2 \\ =\ & 
		(23) \circ N_*(E(f)) + N_*(E(g)) + N_*(E(g)) + G\\ =\ & 
		(23) \circ F + G
		\end{split}
		\end{equation*}
		as desired.
	\end{proof}

	\section{The $\mathcal E$-algebra structure on normalized cochains}
	
	In this section we effectively describe an $\mathcal E$-algebra structure on the normalized cochains of simplicial sets. This will allow us to apply our construction of Cartan $i$-coboundaries to this central example. Following \cite{berger04combinatorial}, we achive this in two steps corresponding to the two operad morphisms
	\begin{equation*}
	\mathcal{E} \to Sur \to \mathrm{End}(N^*(X)).
	\end{equation*}
	Here $Sur$ is the Surjection operad of McClure-Smith \cite{mcclure03cochain} and Berger-Fresse \cite{berger04combinatorial}, the first arrow is the Table Reduction morphism of \cite{berger04combinatorial}, and the second is the natural $Sur$-algebra structure introduced in \cite{mcclure03cochain} and \cite{berger04combinatorial}.
	
	\subsection{The Surjection operad} We review the definition of the Surjection operad of McClure-Smith \cite{mcclure03cochain} and Berger-Fresse \cite{berger04combinatorial}.
	
	For a non-negative integer $n$ define
	\begin{equation*}
	\overline{n} = \begin{cases}
	\{1, \dots, n\} & n > 0 \\
	\qquad \emptyset & n = 0.
	\end{cases}
	\end{equation*}
	Fix $r \geq 1$ and consider all $n \geq 0$. We make the free $\mathbb F_2$-module generated by all functions $s : \overline{n} \to \overline{r}$ into a chain complex by declaring the degree of $s : \overline{n} \to \overline{r}$ to be $r-n$ and its boundary to be
	\begin{equation*}
	\partial s = \sum_{k=1}^{n} s \circ \delta_k
	\end{equation*}
	where $\delta_k : \overline{n-1} \to \overline{n}$ is the injective order preserving function missing $k$. 
	
	We define $Sur(r)$ to be the quotient of this chain complex by the submodule generated by the functions $s : \overline{n} \to \overline{r}$ which are either non-surjective or for which $s(i)$ equals $s(i+1)$ for some $i$. 
	
	The chain complex $Sur(r)$ carries an action of $\Sigma_r = \{\overline{r} \to \overline{r}\ |\ \text{bijective}\}$ given by composition. The collection $Sur = \{Sur(r)\}_{r \geq 0}$ is an operad with partial composition $\circ_p: Sur(r') \otimes Sur(r) \to Sur(r+r'-1)$ defined on two generators $s : \overline{n} \to \overline{r}$ and $s' : \overline{n'}\to\overline{r'}$ as follows. Represent the surjections $s$ and $s'$ by sequences $(s(1), \dots, s(n))$ and $(s'(1), \dots, s'(n'))$ and suppose that $p$ appears $k$ times in the sequence representing $s'$ as $p = s'(i_1) = \dots = s'(i_k)$. Denote the set of all tuples
	\begin{equation*}
	1 = j_0 \leq j_1 \leq \dots \leq j_k = n
	\end{equation*}
	by $J(k,n)$ and for each such tuple consider the subsequences 
	\begin{equation*}
	(s(j_0), \dots, s(j_1)) \qquad (s(j_1),\dots,s(j_2)) \qquad \cdots \qquad (s(j_{k-1}), \dots, s(j_k)).
	\end{equation*}
	Then, in $(s'(1),\dots,s'(n))$, replace the value $s'(i_t)$ by $(s(j_{t-1}),\dots,s(j_t))$. In addition, increase the terms $s(j)$ by $r-1$ and the terms $s'(i)$ such that $s'(i)>r$ by $m-1$. The surjection $s'\circ_p s$ is represented by the sum, parametrized by $J(k,n)$, of these resulting sequences.
	
	\begin{example}
		Let us consider 
		\begin{equation*}
		s = (1,2,1) \qquad s' = (1,2,3,2,1).
		\end{equation*}
		Then, $s' \circ_2 s$ equals
		\begin{equation*}
		\big( 1,(2,3,2),4,(2),1 \big) + \big( 1,(2,3),4,(3,2),1 \big) + \big( 1,(2),4,(2,3,2),1 \big)
		\end{equation*}
		where the internal parenthesis are included solely for expository purposes.
	\end{example}

	\subsection{The Table Reduction morphism}
	
	We review the definition of the Table Reduction operad morphism
	\begin{equation*}
	TR : \mathcal E \to Sur
	\end{equation*}
	introduced in \cite{berger04combinatorial}. 
	
	Given a basis element $(\sigma_0, \dots, \sigma_n) \in \mathcal E(r)_n$ we define
	\begin{equation*}
	TR(\sigma_0, \dots, \sigma_n) = \sum_{a} s_{a}
	\end{equation*}
	as a sum of surjections
	\begin{equation*}
	s_{a} : \{1, \dots, n+r \} \to \{1, \dots, r\}
	\end{equation*}
	parametrized by all $a = (a_0, \dots, a_n)$ with each $a_i \geq 1$ and $a_0 + \cdots + a_n = n + r$.
	
	For one such tuple $a$ we now describe its associated surjection $s_a$. Consider the table
	\begin{center}
		\begin{tabular} {c | c c c}
			& $\sigma_0(1)$ & $\cdots$ & $\sigma_0(r)$ \\
			& $\sigma_1(1)$ & $\cdots$ & $\sigma_1(r)$ \\
			& $\vdots$      & $\ddots$ & $\vdots$      \\
			& $\sigma_n(1)$ & $\cdots$ & $\sigma_n(r)$
		\end{tabular}
	\end{center}
	define recursively
	\begin{equation*}
	A_{-1} = 0 \qquad A_i = A_{i-1} + a_{i.}
	\end{equation*}
	For $k \in \{1, \dots, n+r\}$ we identify $i \in \{1, \dots, n\}$ such that $A_{i-1} < k \leq A_{i}$ and define $s_a(k)$ to be the $(k - A_{i-1})$-th element in $(\sigma_i(1), \dots, \sigma_i(r))$ not in
	\begin{equation*}
	\big\{ s_a(j) \ | \ j < k \text{ and } j \neq A_0, \dots , A_{i-1} \big\}.
	\end{equation*}
	
	\begin{example}
		Let us compute $TR\big( (23), e, (12)(34) \big)$. Its associated table is
		\begin{center}
			\begin{tabular} {c | c}
				& 1 3 2 4 \\
				& 1 2 3 4 \\
				& 2 1 4 3
			\end{tabular}
		\end{center}
		and the tuples parametrizing the sum correspond to all permutations of $\{(1,1,4), (1,2,3), (2,2,2)\}$. Then,
		\begin{equation*}
		\begin{split}
		TR \big( (23), e, (12)(34) \big) = &\ (1, 3, 2, 3, 4, 3) \\ + &\ 
		(1, 1, 2, 3, 4, 4)+
		(1, 3, 2, 4, 4, 4)+
		(1, 1, 2, 1, 4, 3) \\ + &\
		(1, 3, 2, 2, 2, 4) +
		(1, 1, 2, 3, 4, 3)+
		(1, 3, 2, 2, 4, 4) \\ + &\
		(1, 3, 2, 3, 4, 4)+
		(1, 1, 2, 2, 4, 3)+
		(1, 3, 2, 2, 4, 3) \\ = &\
		(1, 3, 2, 3, 4, 3).
		\end{split}
		\end{equation*}
	\end{example}
	
	\subsection{The diagonal and join maps}
	
	We review the natural $Sur$-algebra structure on $N^*(X)$ introduced in \cite{mcclure03cochain} and \cite{berger04combinatorial} using the perspective presented in \cite{medina2018algebraic}. We notice that this \mbox{$Sur$-algebra} structure provides $N^*(X)$ with an $\mathcal E$-algebra structure
	\begin{equation*}
	\mathcal E \to Surj \to \mathrm{End}(N^*(X)). 
	\end{equation*}
	
	Let $\bm{\Delta}^n$ be the representable simplicial set $\mathrm{Hom}_{\bm{\Delta}}(-, [n])$. We identify a morphism $[m] \to [n]$ with its image $\{v_0, \dots, v_m\} \subseteq \{0, \dots, n\}$. As usual, it suffices to describe the $Surj$-structure for representable simplicial sets only.
	
	Let $\Delta: N^*(\bm{\Delta}^n) \to N^*(\bm{\Delta}^n)^{\otimes 2}$ be defined on basis elements by
	\begin{equation*}
	\Delta \{v_0, \dots, v_m\} = \sum_{i = 0}^{m} \{v_0, \dots, v_i\} \otimes \{v_i, \dots, v_m\}
	\end{equation*}
	and, for any $k \geq 2$, let $\ast : N^*(\bm{\Delta}^n)^{\otimes k} \to N^*(\bm{\Delta}^n)$ be defined on basis elements by 
	\begin{equation*}
	\ast(a_1 \otimes \cdots \otimes a_k) = \begin{cases}
	\bigcup_{i=1}^k a_i & \forall\ i < j,\ a_i \cap a_j = \emptyset \\
	\,0 & \exists\ i < j,\ a_i \cap a_j \neq \emptyset.
	\end{cases}
	\end{equation*}
	We notice that $\Delta$ is equal to the composition of the Alexander-Whitney map and the doubling map
	\begin{align*}
	N^*(\bm{\Delta}^n) &\to N^*(\bm{\Delta}^n \times \bm{\Delta}^n) \\
	a \quad &\mapsto \quad a \times a
	\end{align*}
	and that $\ast$ is commutative.
	
	We now describe the $Sur$-algebra structure on $N^*(\bm{\Delta}^n)$. Let
	\begin{equation*}
	s : \{1, \dots, n+r \} \to \{1, \dots, r\}
	\end{equation*}
	be a surjection, then $s(\alpha_1 \otimes \cdots \otimes \alpha_r)$ is defined by
	\begin{equation} \label{equation: defining lowercase phi}
	s(\alpha_1 \otimes \cdots \otimes \alpha_r)(a) = (\alpha_1 \otimes \cdots \otimes \alpha_r) \big(\ast_{s^{-1}(1)} \otimes \cdots \otimes \ast_{s^{-1}(r)} \big)\ \Delta^{r+d-1} (a)
	\end{equation}
	where $\Delta^{k}$ is recursively defined by
	\begin{equation*}
	\begin{split}
	&\Delta^1 = \Delta \\ &\Delta^{k+1} = (\Delta \otimes \mathrm{id}^{\otimes k})\, \Delta^k
	\end{split}
	\end{equation*} 
	and $\ast_{s^{-1}(i)}$ is given by applying $\ast$ to the factors in positions $s^{-1}(i)$. 
	
	\begin{remark}
		This $Surj$-algebra structure on normalized cochains of simplicial sets is induced, as explained in \cite{medina2018cellular}, from an explicit cellular $E_\infty$-bialgebra structure on the geometric realization of $\bm{\Delta^1}$.
	\end{remark}
	
	\appendix
	
	\section{Explicit examples}
	
	Let $\bm{\Delta}_n$ be the $n$-th representable simplicial set and denote $\{0, \dots, n\} \in N_n(\bm{\Delta}^n)$ by $\mathrm{id}_n$. In this appendix, we will compute for an arbitrary pair of homogeneous cocycles $\alpha, \beta \in N^*(\bm{\Delta}_n)$ the value
	\begin{equation*}
	\zeta_i(\alpha \otimes \beta)(\mathrm{id}_n)
	\end{equation*}
	in terms of $\alpha$ and $\beta$. We will restrict to $i = 0,1,2$ and $n$ the smallest integer for which this value is not identically $0$. 
	
	Recall that 
	\begin{equation*}
	\tilde{x}_i = (e, (12), e, \dots, (12)^i) \in \mathcal E(2)
	\end{equation*} 
	and that our Cartan $i$-coboundary is defined by 
	\begin{equation*}
	\zeta_i(\alpha \otimes \beta) = TR(H_1 + H_2)(\tilde{x}_i)\,(\alpha \otimes \alpha \otimes \beta \otimes \beta)
	\end{equation*}
	where $TR : \mathcal E \to Surj$ is the Table Reduction morphism and
	\begin{equation*}
	H_1 (\sigma_0, \dots, \sigma_n) = 
	\sum _{i=0}^n \big( (23) f \sigma_0, \dots,  (23) f \sigma_i, g \sigma_i, \dots, g \sigma_n \big)
	\end{equation*}
	\begin{equation*}
	H_2 (\sigma_0, \dots, \sigma_n) = N_*(\bm{\circ}_{E}) \big( (e, \dots, e) \otimes SHI (\sigma_0, \dots, \sigma_n)^{\otimes 2} \big).
	\end{equation*}
	
	We remark that the cochain $\zeta_i(\alpha \otimes \beta)$ witnesses the relation
	\begin{equation*}
	Sq^{p+q-i} \big( [\alpha]\, [\beta] \big) = \sum_{p+q-i = j+k} Sq^j([\alpha])\, Sq^k([\beta])
	\end{equation*}
	where $-p$ and $-q$ are the degrees of $\alpha$ and $\beta$.
	
	\begin{example} \label{example: Cartan coboundary 0}
		For $i=0$, we have
		\begin{equation*}
		H_1(\tilde{x}_0) = ((23)fe, ge) = ((23), e)
		\end{equation*}
		and
		\begin{equation*}
		H_2(\tilde{x}_0) = 0.
		\end{equation*}
		Therefore,
		\begin{equation*}
		TR \big( (H_1 + H_2) (\tilde{x}_0) \big) = (1,3,2,3,4)
		\end{equation*}
		and 
		\begin{equation*}
		\zeta_0(\alpha \otimes \beta)\{0,1,2,3\} = \alpha \{0, 1\} \, \alpha\{1, 2\} \, \beta \{1, 2\} \, \beta \{2, 3\}.
		\end{equation*}
	\end{example}
	
	\begin{example} \label{example: Cartan coboundary 1}
		For $i=1$, we have
		\begin{equation*}
		H_1(\tilde{x}_1) = 
		\big( (23), e, (12)(34) \big) + \big( (23), (23)(13)(24), (12)(34) \big)
		\end{equation*}
		and
		\begin{equation*}
		H_2 (\tilde{x}_1) =
		\big( e, (34), (12)(34) \big).
		\end{equation*}
		Therefore,
		\begin{equation*}
		TR \big( (H_1 + H_2)(\tilde{x}_1) \big) =
		(1,3,2,3,4,3)\ +\ (1,2,4,1,4,3)
		\end{equation*}
		and 
		\begin{equation*}
		\begin{split}
		\zeta_1(\alpha \otimes \beta)\{0,1,2,3,4\} =\ &
		\alpha\{0, 1\} \, \alpha\{1, 2\} \, \beta\{1, 2, 4\} \, \beta\{2, 3, 4\}\\ +\ 
		& \alpha\{0, 2, 3\} \, \alpha\{0, 1, 2\} \, \beta\{3, 4\} \, \beta\{2, 3\}.
		\end{split}
		\end{equation*}
	\end{example}
	
	\begin{example} \label{example: Cartan coboundary 2}
		For $i=2$, we have
		\begin{equation*}
		\begin{split}
		H_1(\tilde{x}_2) = &\ 
		\big( (23), e, (12)(34), e \big) +
		\big( (23), (12)(34)(23), (12)(34), e \big) \\ + &\ 
		\big( (23), (12)(34)(23), (23), e \big)
		\end{split}
		\end{equation*}
		and
		\begin{equation*}
		\begin{split}
		H_2(\tilde{x}_2) = &\ 
		\big( e, (34), (12)(34), (12) \big) +
		\big( e, e, (12), e \big)  \\ + &\
		\big( e, (34), e, (12) \big) +
		\big( e, (12)(34), (12), e \big).
		\end{split}
		\end{equation*}
		Therefore,
		\begin{equation*}
		\begin{split}
		TR \big( (H_1 + H_2)(\tilde{x}_2) \big) =\ & 
		(1, 3, 2, 3, 4, 3, 4) + (1, 2, 4, 1, 4, 3, 4) \\ +\ & 
		(1, 2, 1, 3, 2, 3, 4) + (1, 3, 2, 3, 2, 3, 4)
		\end{split}
		\end{equation*}
		and
		\begin{equation*}
		\begin{split}
		\zeta_2(\alpha \otimes \beta)\{0,1,2,3,4,5\} & =\ 
		\alpha\{0, 1\}\, \alpha\{1, 2\}\, \beta\{1, 2, 3, 4\}\, \beta\{2, 3, 4, 5\} \\ & +\ 
		\alpha\{0, 1\}\, \alpha\{1, 2\}\, \beta\{1, 2, 4, 5\}\, \beta\{2, 3, 4, 5\} \\ & +\ 
		\alpha\{0, 2, 3\}\, \alpha\{0, 1, 2\}\, \beta\{3, 4, 5\}\, \beta\{2, 3, 5\} \\ & +\
		\alpha\{0, 1, 2, 3\}\, \alpha\{0, 1, 3, 4\}\, \beta\{3, 4\}\, \beta\{4, 5\} \\ & +\
		\alpha\{0, 1, 2, 3\}\, \alpha\{1, 2, 3, 4\}\, \beta\{3, 4\}\, \beta\{4, 5\}.
		\end{split}
		\end{equation*}
	\end{example}
	
	\bibliographystyle{alpha} 
	\bibliography{chainlevelcartan}
\end{document}